\theoremstyle{plain}
\newtheorem{thm}{Theorem}[subsection]
\newtheorem{prop}[thm]{Proposition}
\newtheorem{cor}{Corollary}[subsection]
\theoremstyle{definition}
\newtheorem{defn}{Definition}[subsection]
\theoremstyle{remark}
\newtheorem{remark}{Remark}[subsection]
\theoremstyle{proof}
\theoremstyle{claim}
\author{SELCAN AKSOY\\ Florida State University\\ Faculty of Mathematics\\ E-mail: saksoy@math.fsu.edu}
\title{Composition of Roofs in Derived Category}
\begin{document}
\maketitle
\begin{center}
\textsc{$\mathbf{Abstract}$}
\end{center}
In that paper, we prove that the composition of two roofs is another roof by using mapping cone of a morphism of cochain complexes. 
\tableofcontents
\section{Introduction}
Assume that $\mathcal{A}$ is an abelian category. P. Aluffi defines a mapping cone $MC(f)$ of a morphism $f:~A\rightarrow B$ in $C(\mathcal{A})$ and homotopy between two morphisms in that category in {\color{blue} \cite{al}}. Also, in {\color{blue} \cite{kr}}, H. Krause defines triangulated category and the localizing class. After that, he proves that the homotopic category $K(\mathcal{A})$ is triangulated in {\color{red} Section 2.5}.\\

Using this information, we prove that for a given upside down roof in the localization of $K(\mathcal{A})$, we can obtain a regular roof in that category. This allows us to compute the composition of two regular roofs in the localization of homotopic category.

\section{Mapping Cone and Homotopy}
The collection $C(\mathcal{A})$ consisting of all cochain complexes in an abelian category $\mathcal{A}$ forms an abelian category. It is easy to show that the set of morphisms of that category is an abelian group, finite products and coproducts exist since they exist in $\mathcal{A}$.
\begin{defn}
A morphism $f$ between cochain complexes is quasi isomorphism if it induces an isomorphism in cohomology.
\end{defn}

For a given morphism $f:~A\rightarrow B$ between cochain complexes $A$ and $B$, we define a mapping cone $MC(f)$ as $MC(f)^i=A[1]^i\oplus B^i=A^{i+1}\oplus B^i$ for all $i$. Here, we get the morphisms 
\begin{align*}
& d^i_{MC(f)}:~MC(f)^i\rightarrow MC(f)^{i+1},\\
& d^i_{MC(f)}(a,~b)=(-d^{i+1}_A(a),~f^{i+1}(a)+d^i_B(b))
\end{align*}

between those objects.\\

$MC(f)$ is a cochain complex since $d^{i+1}_{MC(f)}\circ d^i_{MC(f)}=0$.
\begin{align*}
\begin{tikzpicture}
\node (A) at (0, 0) {$...$};
\node (B) at (3, 0) {$A^{i+1}$};
\node (C) at (6, 0) {$A^{i+2}$};
\node (D) at (9, 0) {$A^{i+3}$};
\node (E) at (12, 0) {$...$};
\node (F) at (0, -2) {$...$};
\node (G) at (3, -2) {$B^i$};
\node (H) at (6, -2) {$B^{i+1}$};
\node (J) at (9, -2) {$B^{i+2}$};
\node (K) at (12, -2) {$...$};
\node (L) at (3, -1) {$\oplus$};
\node (M) at (6, -1) {$\oplus$};
\node (N) at (9, -1) {$\oplus$};
\path[->] (A) edge (B);
\path[->] (B) edge node [above] {$-d^{i+1}_A$} (C);
\path[->] (C) edge node [above] {$-d^{i+2}_A$} (D);
\path[->] (D) edge (E);
\path[->] (F) edge (G);
\path[->] (G) edge node [below] {$d^i_B$} (H);
\path[->] (H) edge node [below] {$d^{i+1}_B$} (J);
\path[->] (J) edge (K);
\path[->] (B) edge node [right, midway]{$f^{i+1}$} (H);
\path[->] (C) edge node [right, midway]{$f^{i+2}$} (J);
\end{tikzpicture}
\end{align*}
\begin{defn}
A homotopy $k$ between two morphisms of cochain complexes $f,~g:~A\rightarrow B$ is a collection of morphisms $k^i:~A^i\rightarrow B^{i-1}$ such that for all $i$, 
\begin{align*}
g^i -f^i=d^{i-1}_B\circ k^i + k^{i+1}\circ d^i_A.
\end{align*}
\end{defn}

The above morphisms $f$ and $g$ are homotopic if there is a homotopy between them. We use the following diagram to show that homotopy and use the symbol $f\sim g$ to mean there exists a homotopy between the morphisms $f$ and $g$.
\begin{align*}
\begin{tikzpicture}
\node (A) at (0, 0) {$...$};
\node (B) at (3, 0) {$A^{i-1}$};
\node (C) at (6, 0) {$A^i$};
\node (D) at (9, 0) {$A^{i+1}$};
\node (E) at (12, 0) {$...$};
\node (F) at (-3, -2) {$...$};
\node (G) at (0, -2) {$B^{i-2}$};
\node (H) at (3, -2) {$B^{i-1}$};
\node (J) at (6, -2) {$B^i$};
\node (K) at (9, -2) {$...$};
\path[->] (A) edge (B);
\path[->] (B) edge node [above] {$d^{i-1}_A$} (C);
\path[->] (C) edge node [above] {$d^i_A$} (D);
\path[->] (D) edge (E);
\path[->] (F) edge (G);
\path[->] (G) edge node [below] {$d^{i-2}_B$} (H);
\path[->] (H) edge node [below] {$d^{i-1}_B$} (J);
\path[->] (J) edge (K);
\path[->] (B) edge node [left, midway]{$k^{i-1}$} (G);
\path[->] (C) edge node [left, midway]{$k^i$} (H);
\path[->] (D) edge node [left, midway]{$k^{i+1}$} (J);
\end{tikzpicture}
\end{align*}
\begin{defn}
A morphism $f:~A\rightarrow B$ is a homotopy equivalence if there is a morphism $g:~B\rightarrow A$ such that $f\circ g \sim id_B$ and $g\circ f\sim id_A$.
\end{defn}

$A$ and $B$ are homotopy equivalent if there is a homotopy equivalence $A\rightarrow B$.
\begin{prop}
{\color{blue} \cite{al}} If $f,~g:~A\rightarrow B$ are homotopic, then $H^{\bullet}(f)=H^{\bullet}(g)$.
\end{prop}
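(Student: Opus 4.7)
The plan is to show that the induced maps agree on each cohomology group $H^i$. Fix $i$ and let $k=\{k^j\}$ be a homotopy between $f$ and $g$, so by Definition 2.0.2 we have
\begin{equation*}
g^j - f^j = d^{j-1}_B \circ k^j + k^{j+1} \circ d^j_A
\end{equation*}
for every $j$. By definition, $H^i(f)$ and $H^i(g)$ are the maps $\ker d^i_A / \operatorname{im} d^{i-1}_A \to \ker d^i_B / \operatorname{im} d^{i-1}_B$ induced by $f^i$ and $g^i$, so it suffices to prove that for each cocycle $a \in \ker d^i_A$ the difference $g^i(a) - f^i(a)$ lies in $\operatorname{im} d^{i-1}_B$.

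The key step is to evaluate the homotopy identity at such an $a$. Since $d^i_A(a) = 0$, the second term $k^{i+1}(d^i_A(a))$ vanishes, and we are left with $g^i(a) - f^i(a) = d^{i-1}_B(k^i(a))$, which is manifestly a coboundary. This shows $[g^i(a)] = [f^i(a)]$ in $H^i(B)$, so $H^i(f)$ and $H^i(g)$ agree as maps $H^i(A) \to H^i(B)$. Doing this for every $i$ yields $H^{\bullet}(f) = H^{\bullet}(g)$.

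There is essentially no obstacle here: once the homotopy relation is unpacked, the cocycle condition immediately kills the term that would otherwise prevent the difference from being a coboundary. The only bookkeeping point worth being careful about is making sure one applies the identity to a representative in $\ker d^i_A$ rather than to an arbitrary element of $A^i$, since otherwise $k^{i+1} \circ d^i_A$ need not vanish.
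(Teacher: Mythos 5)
Your proof is correct and is the standard argument: evaluating the homotopy identity on a cocycle $a\in\ker d^i_A$ kills the $k^{i+1}\circ d^i_A$ term and exhibits $g^i(a)-f^i(a)$ as a coboundary. The paper itself gives no proof of this proposition (it simply cites \cite{al}), and your argument coincides with the one in that reference, so there is nothing to compare beyond noting that you have supplied the omitted details correctly.
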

\begin{cor}
If $f:~A\rightarrow B$ is homotopy equivalence, then $H^{\bullet}(A)\cong H^{\bullet}(B)$.
\end{cor}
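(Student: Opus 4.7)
The plan is to deduce the corollary directly from the preceding Proposition together with the functoriality of $H^{\bullet}$ as a graded-group-valued operation on $C(\mathcal{A})$. Since the paper has already defined $H^{\bullet}$ and used it in the statement of the Proposition, I will take as background the standard facts that $H^{\bullet}(\mathrm{id}_A) = \mathrm{id}_{H^{\bullet}(A)}$ and $H^{\bullet}(h_1 \circ h_2) = H^{\bullet}(h_1) \circ H^{\bullet}(h_2)$ for composable morphisms of cochain complexes; these follow immediately from the fact that $H^i(h)$ is induced by $h^i$ on the subquotient $\ker d^i / \operatorname{im} d^{i-1}$.

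First I would unpack the hypothesis: a homotopy equivalence $f : A \to B$ comes equipped, by Definition~2.0.3, with a morphism $g : B \to A$ such that $f \circ g \sim \mathrm{id}_B$ and $g \circ f \sim \mathrm{id}_A$. Next I would apply the Proposition to each of these two homotopies, obtaining the equalities
\begin{align*}
H^{\bullet}(f \circ g) &= H^{\bullet}(\mathrm{id}_B), \\
H^{\bullet}(g \circ f) &= H^{\bullet}(\mathrm{id}_A).
\end{align*}

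Then I would rewrite the left-hand sides using functoriality and the right-hand sides using the fact that $H^{\bullet}$ preserves identities, to conclude
\begin{align*}
H^{\bullet}(f) \circ H^{\bullet}(g) &= \mathrm{id}_{H^{\bullet}(B)}, \\
H^{\bullet}(g) \circ H^{\bullet}(f) &= \mathrm{id}_{H^{\bullet}(A)}.
\end{align*}
This exhibits $H^{\bullet}(f)$ and $H^{\bullet}(g)$ as mutually inverse morphisms of graded groups (or objects in $\mathcal{A}$, depending on how cohomology is construed), proving that $H^{\bullet}(A) \cong H^{\bullet}(B)$ with inverse explicitly given by $H^{\bullet}(g)$.

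I do not anticipate a genuine obstacle here; the whole content is a two-line invocation of the Proposition. The only mildly subtle point, which I would flag in the write-up for completeness rather than for difficulty, is that functoriality of $H^{\bullet}$ on $C(\mathcal{A})$ is being used implicitly and has not been explicitly stated earlier in the paper; a single sentence verifying (or citing) it is enough to make the argument self-contained.
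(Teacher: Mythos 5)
Your proof is correct and is exactly the argument the paper intends: the corollary is stated immediately after the Proposition precisely so that one applies it to the two homotopies $f\circ g\sim \mathrm{id}_B$ and $g\circ f\sim \mathrm{id}_A$ and uses functoriality of $H^{\bullet}$ to see that $H^{\bullet}(f)$ and $H^{\bullet}(g)$ are mutually inverse. Your remark that functoriality of $H^{\bullet}$ is used implicitly and deserves a sentence of justification is a fair and accurate observation about the paper's exposition.
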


Every homotopy equivalence is a quasi isomorphism, but every quasi isomorphism may not be a homotopy equivalence. 

\section{Triangulated Categories}
\begin{defn}
{\color{blue} \cite{kr}} Assume that $\mathcal{A}$ is an additive category with an equivalence $\mathcal{F}:~\mathcal{A}\rightarrow \mathcal{A}$. A triangle $(f,~g,~h)$ in $\mathcal{A}$ is a sequence of morphisms $\xymatrix{ X\ar[r]^f & Y\ar[r]^g & Z\ar[r]^h & \mathcal{F}(X)}$ for all objects $X$, $Y$ and $Z$ in $\mathcal{A}$.\\

A morphism between two triangles $(f_1,~g_1,~h_1)$ and $(f_2,~g_2,~h_2)$ is a triple $(k_1,~k_2,~k_3)$ of morphisms in $\mathcal{A}$ making the following diagram commute.
\begin{equation*}
\xymatrix{ X\ar[d]_{k_1} \ar[r]^{f_1} & Y\ar[d]_{k_2} \ar[r]^{g_1} & Z\ar[r]^{h_1} \ar[d]_{k_3} & \mathcal{F}(X) \ar[d]_{\mathcal{F}(k_1)}\\ X'\ar[r]_{f_2} & Y'\ar[r]_{g_2} & Z'\ar[r]_{h_2} & \mathcal{F}(X')}
\end{equation*}

The category $\mathcal{A}$ is called pre-triangulated if it has a class of exact triangles satisfying the following conditions.
\begin{enumerate}
\item A triangle is exact if it is isomorphic to an exact triangle. 
\item For all objects $X$ in $\mathcal{A}$, the triangle $\xymatrix{ 0\ar[r] & X\ar[r]^{id} & X\ar[r] & 0}$ is exact.
\item Each morphism $f:~X\rightarrow Y$ can be completed to an exact triangle $(f,~g,~h)$.
\item A triangle $(f,~g,~h)$ is exact if and only if the triangle $(g,~h,~-\mathcal{F}(f))$ is exact.
\item Given two exact triangles $(f_1,~g_1,~h_1)$ and $(f_2,~g_2,~h_2)$, each pair of maps $k_1$ and $k_2$ satisfying $k_2\circ f_1=f_2\circ k_1$ can be completed to a morphism;
\begin{equation*}
\xymatrix{ X\ar[d]_{k_1} \ar[r]^{f_1} & Y\ar[d]_{k_2} \ar[r]^{g_1} & Z\ar[r]^{h_1} \ar[d]_{k_3} & \mathcal{F}(X) \ar[d]_{\mathcal{F}(k_1)}\\ X'\ar[r]_{f_2} & Y'\ar[r]_{g_2} & Z'\ar[r]_{h_2} & \mathcal{F}(X')}
\end{equation*}

$\mathcal{A}$ is a triangulated category if in addition it satisfies the following axiom.
\item {\bf The Octahedral Axiom:} Given exact triangles $(f_1,~f_2,~f_3)$, $(g_1,~g_2,~g_3)$ and $(h_1,~h_2,~h_3)$ with $h_1=g_1 \circ f_1$, there exists an exact triangle $(k_1,~k_2,~k_3)$ making the following diagram commutative.
\begin{equation*}
\xymatrix{ X\ar[d]_= \ar[r]^{f_1} & Y\ar[d]^{g_1} \ar[r]^{f_2} & U\ar[r]^{f_3} \ar[d]^{k_1} & \mathcal{F}(X) \ar[d]^=\\ X\ar[r]^{h_1} & Z\ar[d]^{g_2} \ar[r]^{h_2} & V\ar[d]^{k_2} \ar[r]^{h_3} & \mathcal{F}(X) \ar[d]^{\mathcal{F}(f_1)}\\ & W\ar[r]^= \ar[d]^{g_3} & W\ar[r]^{g_3} \ar[d]^{k_3} & \mathcal{F}(Y)\\ & \mathcal{F}(Y) \ar[r]^{\mathcal{F}(f_2)} & \mathcal{F}(U)}
\end{equation*}
\end{enumerate}
\end{defn}
\begin{remark}
If $\mathcal{A}$ is a pretriangulated category, then $A^{op}$ is a pretriangulated category, too.
\end{remark}

\section{The Localization of A Category}
\begin{defn}
{\color{blue} \cite{kr}} Assume that $\mathcal{A}$ is a category and $F$ is a class of maps in $\mathcal{A}$. $F$ is a localizing class if the following conditions are satisfied.
\begin{enumerate}
\item If $f,~g$ are composible maps in $F$, then $g\circ f$ is in $F$.
\item The identity map $id_A$ is in $F$ for all $A\in \mathcal{A}$.
\item If $f:~A\rightarrow B$ is in $F$, then every pair of maps $B'\rightarrow B$ and $A\rightarrow A''$ in $\mathcal{A}$ can be completed to a pair of commutative diagrams;
\begin{align*}
\xymatrix{ A'\ar[r] \ar[d]^{f'} & A \ar[d]^{f}\\ B'\ar[r] & B} \quad \xymatrix{ A\ar[r] \ar[d]^{f} & A''\ar[d]^{f''}\\ B\ar[r] & B''}  
\end{align*}

such that $f'$ and $f''$ are in $F$.
\item If $f,~g:~A\rightarrow B$ are maps in $\mathcal{A}$, then there is some $h:~A'\rightarrow A$ in $F$ with $f\circ h=g\circ h$ if and only if there is some $k:~B\rightarrow B''$ in $F$ with $k\circ f=k\circ g$.
\end{enumerate}
\end{defn}
\begin{defn}
{\color{blue} \cite{kr}} Assume that $\mathcal{A}$ is a category and $F$ is a class of maps in $\mathcal{A}$. The localization of $\mathcal{A}$ with respect to $F$ is a category $\mathcal{A}[F^{-1}]$ together with a functor $\mathcal{F}:~\mathcal{A} \rightarrow \mathcal{A}[F^{-1}]$ such that $\mathcal{F}(f)$ is an isomorphism for all $f$ in $F$ and any functor $\mathcal{G}:~\mathcal{A} \rightarrow \mathcal{B}$ such that $\mathcal{G}(f)$ is an isomorphism for all $f$ in $F$ factors uniquely through $\mathcal{F}$.
\end{defn}

We can always find a localization like that.
\begin{defn}
Assume that $\mathcal{A}$ is a category and $F$ is a localizing class. The objects of $\mathcal{A}[F^{-1}]$ are the objects of $\mathcal{A}$. The morphisms $A\rightarrow B$ in $\mathcal{A}[F^{-1}]$ are equivalence classes of diagrams $\xymatrix{A & \ar[l]_f B'\ar[r]^g & B}$ with the morphism $f$ in $F$ for all objects $A$ and $B$ in the category $\mathcal{A}[F^{-1}]$. We will call those morphisms as regular roofs.
\end{defn}

A pair $(f,~g)$ is also called a fraction because it is written as $g\circ f^{-1}$ in $\mathcal{A}[F^{-1}]$.\\

\begin{remark}
The functor $\mathcal{F}:~\mathcal{A}\rightarrow \mathcal{A}[F^{-1}]$ sends a map $f:~A\rightarrow B$ to the pair $(id_A,~f)$. 
\end{remark}
\begin{defn}
$(f,~g)$ and $(f',~g')$ are equivalent if there exists a commutative diagram with $f''$ in $F$;
\begin{equation*}
\xymatrix{ & B'\ar[ld]_f \ar[rd]^g\\ A & \ar[l]_{f''} B''' \ar[r]^{g''} \ar[u] \ar[d] & B\\ & B''\ar[ul]^{f'} \ar[ur]_{g'}}
\end{equation*}
\end{defn}

\section{Composition of Two Roofs}
\begin{defn}
{\color{blue} \cite{al}} Assume that $\mathcal{A}$ is an abelian category. $K(\mathcal{A})$ is a category whose objects are the objects in $C(\mathcal{A})$ and the set of morphisms is
\begin{align*}
Hom_{K(\mathcal{A})}(A,~B)=Hom_{C(\mathcal{A})}(A,~B) / \sim
\end{align*}

where $\sim$ is homotopy relation. 
\end{defn}

If $f\circ g\sim id$ in $C(\mathcal{A})$, then $f\circ g=id$ in $K(\mathcal{A})$. As a result, homotopy equivalences in $C(\mathcal{A})$ become isomorphisms in $K(\mathcal{A})$ and we say that $K(\mathcal{A})$ is obtained by inverting all homotopy equivalences in $C(\mathcal{A})$. It is an additive category, but not abelian in general since homotopic maps don't have same kernels and cokernels.\\

In {\color{blue} \cite{kr}}, H. Krause proves that $K(\mathcal{A})$ is a triangulated category.\\ 

\begin{remark}
The set of quasi isomorphisms in $K(\mathcal{A})$ for a given abelian category $\mathcal{A}$ forms a localizing class.
\end{remark}
\begin{thm}
{\color{blue} \cite{al}} Assume that $\mathcal{A}$ is an abelian category and we have two morphisms $\xymatrix{L\ar[r]^{\alpha} & \underline{K}}$ and $\xymatrix{M\ar[r]^{\beta} & \underline{K}}$ with $\beta$ is a quasi isomorphism for objects $L$ and $M$ in $K(\mathcal{A})$. Then, there exists a cochain complex $K$, morphisms $K\rightarrow L$ which is quasi isomorphism and $K\rightarrow M$ in $K(\mathcal{A})$ such that the following diagram commutes.
\begin{equation}
\label{1}
\xymatrix{ & K\ar[rd]^{\gamma_1} \ar[ld]_{\gamma_2}\\  L\ar[rd]_{\alpha} & & M\ar[ld]^{\beta}\\ & \underline{K}}
\end{equation}
\end{thm}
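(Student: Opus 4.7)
The plan is to exhibit $K$ as the shifted mapping cone of the combined morphism $(\alpha,-\beta)\colon L\oplus M\to \underline{K}$, from which $\gamma_1$, $\gamma_2$, and the witnessing homotopy will all fall out of the direct-sum structure. Concretely, I would set $K^i := L^i\oplus M^i\oplus \underline{K}^{i-1}$ with differential
\[
d_K^i(l,m,k)=\bigl(d_L^i(l),\ d_M^i(m),\ \beta^i(m)-\alpha^i(l)-d_{\underline{K}}^{i-1}(k)\bigr).
\]
The identity $d_K\circ d_K=0$ is the standard mapping-cone calculation already carried out in Section~2, and the coordinate projections $\gamma_2(l,m,k)=l$ and $\gamma_1(l,m,k)=m$ are then manifestly morphisms in $C(\mathcal{A})$, hence descend to $K(\mathcal{A})$.

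Next, I would define $h^i\colon K^i\to \underline{K}^{i-1}$ by $h(l,m,k)=k$ and verify directly that $\beta\gamma_1-\alpha\gamma_2=d_{\underline{K}}\circ h+h\circ d_K$, so that $\alpha\gamma_2\sim \beta\gamma_1$ in $C(\mathcal{A})$. By the definition of $K(\mathcal{A})$ as $C(\mathcal{A})$ modulo homotopy, this forces $\alpha\gamma_2=\beta\gamma_1$ in $K(\mathcal{A})$, so diagram~(\ref{1}) commutes there.

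The substantive step is to show that $\gamma_2$ is a quasi-isomorphism. For this I would use the degreewise split short exact sequence of cochain complexes
\[
0\longrightarrow N\longrightarrow K\xrightarrow{\ \gamma_2\ } L\longrightarrow 0,
\]
where $N=\ker\gamma_2$ has degree-$i$ piece $M^i\oplus \underline{K}^{i-1}$. A direct check identifies $N$, up to a harmless sign, with the shifted mapping cone $MC(\beta)[-1]$. The induced long exact cohomology sequence then reduces the claim to $H^\bullet(N)=0$, equivalently $H^\bullet(MC(\beta))=0$; and the latter is the standard criterion, coming from the mapping-cone triangle $M\to \underline{K}\to MC(\beta)$, that $\beta$ is a quasi-isomorphism.

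The main obstacle I anticipate is not conceptual but bookkeeping: keeping the sign conventions consistent across the differential on $K$, the identification $N\cong MC(\beta)[-1]$, and the homotopy computation. Once the sign convention of Section~2 is fixed throughout, each verification is short and mechanical. I note that one could alternatively deduce existence from Axiom~3 of a localizing class together with the remark that quasi-isomorphisms in $K(\mathcal{A})$ form such a class; the construction above is in effect a concrete witness for why that axiom holds in $K(\mathcal{A})$, and fits the paper's mapping-cone viewpoint more naturally.
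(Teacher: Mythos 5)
Your proposal is correct and follows essentially the same route as the paper: the paper also realizes $K$ as a shifted mapping cone with underlying graded object $K^i=L^i\oplus M^i\oplus \underline{K}^{i-1}$ (written there as $MC(\gamma)[-1]$ for $\gamma\colon L\to\underline{K}\to MC(\beta)$, which agrees with your $MC((\alpha,-\beta))[-1]$ up to a sign absorbed into $\gamma_1$), uses the same coordinate projections and the same homotopy $h=\pm\underline{k}$, and derives that $\gamma_2$ is a quasi-isomorphism from $H^{\bullet}(MC(\beta))=0$. Your long exact cohomology sequence of the degreewise split sequence $0\to MC(\beta)[-1]\to K\to L\to 0$ is just the cohomology sequence of the exact triangle the paper invokes, so the difference is only one of packaging.
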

\begin{proof}
Assume that $\gamma$ is the composition $\xymatrix{L \ar[r] & \underline{K} \ar[r] & MC(\beta)}$, $K=MC(\gamma)[-1]$ and $K^i=L^i\oplus M^i\oplus \underline{K}^{i-1}$. We define morphisms $K^i\rightarrow L^i,~(l,~m,~\underline{k})\rightarrow l$ and $K^i\rightarrow M^i,~(l,~m,~\underline{k})\rightarrow -m$ as in {\color{blue} \cite{al}}.\\

We want to prove that $L$ and $M$ are connected by a regular roof as well.\\

For the rest of the proof, we need to show that the {\color{red} Diagram \ref{1}} commutes.\\

$H^{\bullet}(M)\cong H^{\bullet}(\underline{K})$ since $\beta$ is a quasi isomorphism. This implies that $MC(\beta)$ is exact, so $H^{\bullet}(MC(\beta))=0$.
\begin{align*}
MC(\beta)^i=M[1]^i\oplus \underline{K}^i=M^{i+1}\oplus \underline{K}^i,
\end{align*}
\begin{align*}
d^i_{MC(\beta)}:~MC(\beta)^i\rightarrow MC(\beta)^{i+1},~d^i_{MC(\beta)}(m,~\underline{k})=(-d^{i+1}_M,~\beta^{i+1}(m)+d^i_{\underline{K}}(\underline{k})).
\end{align*}

We define $\gamma^i(l)=(0,~\alpha^i(l))$ and 
\begin{align*}
MC(\gamma)^i=L[1]^i\oplus M^{i+1}\oplus \underline{K}^i=L^{i+1}\oplus M^{i+1}\oplus \underline{K}^i
\end{align*}

where $d^i_{MC(\gamma)}:~MC(\gamma)^i\rightarrow MC(\gamma)^{i+1}$ with
\begin{align*}
d^i_{MC(\gamma)}(l,~m,~\underline{k})=(-d_L^{i+1}(l),~\gamma^{i+1}(l)+d^i_{MC(\beta)}(m,~\underline{k}))=\\ 
(-d_L^{i+1}(l), -d_M^{i+1}(m),~\alpha^{i+1}(l)+\beta^{i+1}(m)+d_{\underline{K}}^i(\underline{k})).
\end{align*}
\begin{align*}
K=MC(\gamma)[-1],
\end{align*}
\begin{align*}
MC(\gamma)[-1]^i=MC(\gamma)^{i-1}=L^i\oplus M^i\oplus \underline{K}^{i-1}=K^i
\end{align*}

 and $d_K^i=-d_{MC(\gamma)}^{i-1}$ with 
\begin{align*}
d^i_K=(d_L^i(l),~d_M^i(m),~-\alpha^i(l)-\beta^i(m)-d^{i-1}{\underline{K}}(\underline{k})).
\end{align*}

Assume that $h^i:~K^i\rightarrow \underline{K}^{i-1}$ takes $(l,~m,~\underline{k})$ to $-\underline{k}$. We need to show that 
\begin{align*}
\alpha^i \circ \gamma_2^i-\beta^i \circ \gamma_1^i=d_{\underline{K}}^{i-1}\circ h^i +h^{i+1}\circ d^i_K
\end{align*}

for all $i\in \mathbb{Z}$ which shows that $\alpha \circ \gamma_2$ and $\beta \circ \gamma_1$ are homotopic maps in $K(\mathcal{A})$. This will show that they are same maps.\\

For all $(l,~m,~\underline{k})\in K^i$,
\begin{align*}
(\alpha^i \circ \gamma_2^i-\beta^i \circ \gamma_1^i)(l,~m,~\underline{k})=\alpha^i(\gamma^i_2(l,~m,~\underline{k}))-\beta^i(\gamma^i_1(l,~m,~\underline{k}))\\
=\alpha^i(l)-\beta^i(-m)=\alpha^i(l)+\beta^i(m)
\end{align*}

since $\mathcal{A}$ is additive. On the other hand,
\begin{align*}
(d^{i-1}_{\underline{K}}\circ h^i+h^{i+1}\circ d^i_K)(l, m, \underline{k})=d^{i-1}_{\underline{K}}(h^i(l,~m,~\underline{k}))+h^{i+1}(d^i_N(l,~m,~\underline{k}))\\
=d^{i-1}_{\underline{K}}(-k)+\alpha^i(l)+\beta^i(m)+d^{i-1}_{\underline{K}}(\underline{k})=\alpha^i(l)+\beta(m).
\end{align*}

This shows the maps are homotopic and the diagram is commutative.\\

We need to show that $\gamma_2$ is a quasi isomorphism. We have an exact triangle;
\begin{equation*}
\xymatrix{& L \ar[rd]\\ L[1]+MC(\beta) \ar[ur] & & MC(\beta) \ar[ll]}
\end{equation*}

This triangle is isomorphic to an exact triangle;
\begin{equation*}
\xymatrix{& MC(\beta) \ar[rd]\\ L[1]\ar[ur] & & L[1]+MC(\beta) \ar[ll]}
\end{equation*}

Then, we take its cohomology and the triangle still will be exact.
\begin{align*}
\xymatrix{& H^{\bullet}(MC(\beta)) \ar[rd]\\ H^{\bullet}(L[1]) \ar[ur] & & H^{\bullet}(L[1]+MC(\beta)) \ar[ll]}
\end{align*}\\

$H^{\bullet}(MC(\beta))=0$, so $H^{\bullet}(L[1])\cong H^{\bullet}(L[1]+MC(\beta))$ and 
\begin{align*}
L[1]+MC(\beta)=L[1]+M[1]+\underline{K}=K[1].
\end{align*}

Consequently, $H^{\bullet}(L[1])\cong H^{\bullet}(K[1])$. This means $H^{\bullet}(L)\cong H^{\bullet}(K)$, hence $\gamma_2$ is a quasi isomorphism.
\end{proof}

The pair $(f\circ f'',~g'\circ g'')$ is the composition of two pairs $(f,~g)$ and $(f',~g')$ as in the following commutative diagram;
\begin{equation*}
\xymatrix{ & & C''\ar[rd]^{g''} \ar[ld]_{f''} \\ & B'\ar[ld]_f \ar[rd]^g & & C'\ar[rd]^{g'} \ar[ld]_{f'}\\ A & & B & & C}
\end{equation*}

\end{document}